\newtheorem{theorem}{Theorem}
\newtheorem{lemma}[theorem]{Lemma}
\newtheorem{definition}[theorem]{Definition}
\numberwithin{equation}{section}
\title{The Maximum Length for Ducci Sequences on $\mathbb{Z}_m^n$ when $n$ is Even}
\author{Mark L. Lewis }
\address{Department of Mathematical Sciences\\
Kent State University\\
Kent, OH 44242}
\email{lewis@math.kent.edu}
\author{Shannon M. Tefft}
\address{Department of Mathematical Sciences\\
Kent State University\\
Kent, OH 44242}
\email{stefft@kent.edu}
\subjclass{20D60, 11B83, 11B50}
\keywords{Ducci sequence, modular arithmetic, length, period, $n$-Number Game}
\date{September 2024}
\begin{document}
\begin{abstract}
Let $D: \mathbb{Z}_m^n \to \mathbb{Z}_m^n$ be defined so 
\[D(x_1, x_2, ..., x_n)=(x_1+x_2 \; \text{mod} \; m, x_2+x_3 \; \text{mod} \; m, ..., x_n+x_1 \; \text{mod} \; m).\]
$D$ is known as the Ducci function and for $\mathbf{u} \in \mathbb{Z}_m^n$, $\{D^{\alpha}(\mathbf{u})\}_{\alpha=0}^{\infty}$ is the Ducci sequence of $\mathbf{u}$. Every Ducci sequence enters a cycle because $\mathbb{Z}_m^n$ is finite. In this paper, we aim to establish an upper bound for how long it will take for a Ducci sequence in $\mathbb{Z}_m^n$ to enter its cycle when $n$ is even.
\end{abstract}
\maketitle

\section{Introduction}
\indent We focus on an endomorphism, $D$, on $\mathbb{Z}_m^n$ such that 
\[D(x_1, x_2, ..., x_n)=(x_1+x_2 \; \text{mod} \; m, x_2+x_3 \; \text{mod} \; m, ..., x_n+x_1 \; \text{mod} \; m).\]
We call $D$ the \textbf{Ducci function}, similar to \cite{Breuer1, Ehrlich, Glaser}. If $\mathbf{u} \in \mathbb{Z}_m^n$, then $\{D^{\alpha}(\mathbf{u})\}_{\alpha=0}^{\infty}$ is known as the \textbf{Ducci sequence of} $\mathbf{u}$. 

\indent Because $\mathbb{Z}_m^n$ is finite,  every Ducci sequence  eventually enters a cycle. We have a specific name for this cycle, which we give in the following definition:

\begin{definition}
The \textbf{Ducci cycle of} $\mathbf{u}$ is  
\[\{\mathbf{v} \mid \exists \alpha \in \mathbb{Z}^+ \cup \{0\}, \beta \in \mathbb{Z}^+  \ni \mathbf{v}=D^{\alpha+\beta}(\mathbf{u})=D^{\alpha}(\mathbf{u})\}\].
The \textbf{length of} $\mathbf{u}$, $\mathbf{Len(u)}$, is the smallest $\alpha$ satisfying the equation 
\[\mathbf{v}=D^{\alpha+\beta}(\mathbf{u})=D^{\alpha}(\mathbf{u})\]
 for some $v \in \mathbb{Z}_m^n$ \and the \textbf{period of} $\mathbf{u}$, $\mathbf{Per(u)}$, is the smallest $\beta$ that satisfies the equation. 
\end{definition}

\indent To see this in action, let us look at the Ducci sequence of $(0,0,0,1) \in \mathbb{Z}_5^4$: $(0,0,0,1), (0,0,1,1),(0,1,2,1), (1,3,3,1), (4,1,4,2), (0,0,1,1)$. From here, we can see that the Ducci cycle of $(0,0,0,1)$ is $(0,0,1,1),(0,1,2,1), (1,3,3,1), (4,1,4,2),$ $(0,0,1,1)$. We can also determine that $\text{Len}(0,0,0,1)=1$ and $\text{Per}(0,0,0,1)=4$. 

\indent This tuple $(0,0,0,1)$ and any tuple of the form $(0,0,...,0,1) \in \mathbb{Z}_m^n$ is important when it comes to Ducci sequences. The Ducci sequence of $(0,0,...,0,1) \in \mathbb{Z}_m^n$ is called the \textbf{basic Ducci sequence of } $\mathbf{\mathbb{Z}_m^n}$. This definition is first used on page 302 by \cite{Ehrlich} and also by \cite{Breuer1, Dular, Glaser}. We also define $\mathbf{P_m(n)}=\text{Per}(0,0,...,0,1)$ and $\mathbf{L_m(n)}=\text{Len}(0,0,...,0,1)$. Both of these notations are very similar to how Definition 5 of \cite{Breuer1} define them, and for $Per(0,0,...,0,1)$, the notation is also like \cite{Ehrlich, Glaser}. Therefore, our example from the previous paragraph tells us $P_5(4)=4$ and $L_5(4)=1$. These values are significant because by Lemma 1 in \cite{Breuer1}, if $\mathbf{u} \in \mathbb{Z}_m^n$, then $\text{Len}(\mathbf{u}) \leq L_m(n)$ and $\text{Per}(\mathbf{u})|P_m(n)$. Therefore, $P_m(n)$ and $L_m(n)$ provide a maximal value for $Per(\mathbf{u})$ and $Len(\mathbf{u})$ respectively for any $\mathbf{u} \in \mathbb{Z}_m^n$. The notation of $P_m(n)$ representing the maximum value for a tuple in $\mathbb{Z}_m^n$ is also used on page 858 of \cite{Breuer2}. 

\indent For the rest of this paper, we are most interested in the value of $L_m(n)$, particularly when $n$ is even.  Our goal is to prove:
\begin{theorem}\label{MainTheorem}
    Let $n$ be even. Then
    \begin{enumerate}
        \item If $gcf(n,m)=1$, then $L_m(n)=1$.\\
        %Note that the next few have not been proved yet are only hypotheses at this point
        %\item If there exists $p$ prime and $k,l \in \mathbb{Z}^+$ such that $gcf(n,m)=p^r$ and the largest powers of $p$ dividing $n,m$ respectively are $p^k$ and $p^l$, then $L_m(n) \leq p^{k-1}(l(p-1)+1)$\\
        \item If there exists $p$ prime, $k, n_1, m_1 \in \mathbb{Z}^+$,  such that $n=p^kn_1$, $m=pm_1$, $gcf(n_1, m_1)=1$ and $p \nmid n_1, m_1$, then $L_m(n)=p^k$.\\
        \item If there exists $p$ prime and $k,l, n_1, m_1 \in \mathbb{Z}^+$ such that $n=p^kn_1$, $m=p^lm_1$, $gcf(n_1, m_1)=1$, and $p \nmid n_1, m_1$ then $L_m(n) \leq p^{k-1}(l(p-1)+1)$.\\
        \item If there exists $p_1, p_2, ..., p_t$ prime where $p_1<p_2< \cdots < p_t$, for  $1 \leq i \leq t$, and $n=p_1^{k_1}p_2^{k_2} \cdots p_t^{k_t}n_1$, $m=p_1^{l_1}p_2^{l_2} \cdots p_t^{l_t}m_1$ with $gcf(n_1, m_1)=1$ and $p_i \nmid n_1, m_1$ for every $i$, then 
        \[L_m(n)=max\{L_{p_i^{l_i}}(n) \; | \; 1 \leq i \leq t \}.\]
    \end{enumerate}
\end{theorem}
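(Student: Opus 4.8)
The plan is to move to the polynomial model. Identify $\mathbb{Z}_m^n$ with $R_m:=\mathbb{Z}_m[t]/(t^n-1)$ via $(x_0,\dots,x_{n-1})\mapsto\sum_i x_it^i$; then $D$ is multiplication by $t^{-1}(1+t)$, a unit times $1+t$, and since the unit $t^{-1}$ does not affect which powers are eventually periodic, $L_m(n)$ is the pre-period of the multiplicative sequence $\{(1+t)^\alpha\}_{\alpha\ge0}$ in $R_m$ — the least $\alpha$ with $(1+t)^{\alpha+\beta}=(1+t)^\alpha$ for some $\beta\ge1$. Two remarks carry most of the argument. First, because $n$ is even, $t+1\mid t^n-1$ in $\mathbb{Z}[t]$, so $1+t$ is a zero divisor and never a unit in $R_m$, forcing $L_m(n)\ge1$ always; this is the single place where the hypothesis on $n$ enters. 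Second, the Chinese Remainder Theorem gives $R_m\cong\prod_{q\mid m}R_{q^{v_q(m)}}$, and the pre-period of a sequence in a finite product is the maximum of the pre-periods in the factors (an lcm of periods serves as a common period); thus $L_m(n)=\max_{q\mid m}L_{q^{v_q(m)}}(n)$. This already gives (4): primes $q\mid m$ with $q\nmid n$ have $\gcd(n,q^{v_q(m)})=1$ and contribute $1$ by part (1), while each $q=p_i$ contributes $L_{p_i^{l_i}}(n)\ge1$, so the maximum is attained among the $p_i$. It also reduces (1)--(3) to prime-power modulus.

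For (1), when $\gcd(n,q)=1$ the polynomial $t^n-1$ is separable modulo $q$, so $R_{q^v}$ is a product of Galois rings $\mathbb{Z}_{q^v}[t]/(G_j)$ (Hensel lifts of the factors mod $q$); in each such local ring $1+t$ is a unit, hence of finite multiplicative order and pre-period $0$, except when $G_j=t+1$, which occurs for exactly one $j$ since $n$ is even, and that factor is $\mathbb{Z}_{q^v}$ in which $1+t=0$, giving the sequence $1,0,0,\dots$ of pre-period $1$; hence $L_{q^v}(n)=1$. For (2), with $q=p$, $v=1$, $n=p^kn_1$, reduction mod $p$ gives $t^n-1\equiv\prod_i g_i(t)^{p^k}$ with the $g_i$ distinct and irreducible, so $R_p\cong\prod_i\mathbb{F}_p[t]/(g_i^{p^k})$; again $1+t$ is a unit off the unique factor with $g_i=t+1$, namely $\mathbb{F}_p[s]/(s^{p^k})$ with $s=1+t$, where $s$ is nilpotent of index $p^k$ and $s^\beta-1$ is always a unit, so the pre-period there is exactly $p^k$; hence $L_p(n)=p^k$.

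Part (3) is the heart of the matter. Using the cyclotomic identity $\Phi_{d_1}(t^{p^k})=\prod_{j=0}^k\Phi_{p^jd_1}(t)$ (valid since $p\nmid d_1$), write $t^n-1=\prod_{d_1\mid n_1}\Phi_{d_1}(t^{p^k})$; these factors are pairwise coprime over $\mathbb{Z}_{p^l}$ (already coprime mod $p$, where $\Phi_{d_1}(t^{p^k})\equiv\Phi_{d_1}(t)^{p^k}$), so $R_{p^l}\cong\prod_{d_1\mid n_1}\mathbb{Z}_{p^l}[t]/(\Phi_{d_1}(t^{p^k}))$. In each factor $1+t$ is a unit (pre-period $0$) except for $d_1=d_1^*:=\mathrm{ord}_p(-1)\in\{1,2\}$ (so $d_1^*=1$ if $p=2$, $d_1^*=2$ if $p$ is odd, and $d_1^*\mid n_1$ precisely because $n$ is even). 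That remaining factor is $B:=\mathbb{Z}_{p^l}[t]/(t^{p^k}-c)$, with $c=1$ for $p=2$ and $c=-1$ for odd $p$; $B$ is a local ring with maximal ideal $(p,1+t)$, so $(1+t)^\beta-1$ is a unit for every $\beta\ge1$, and the pre-period of $s:=1+t$ in $B$ equals its nilpotency index. So (3) is equivalent to the bound: the nilpotency index of $s$ in $B$ is at most $p^{k-1}\big(l(p-1)+1\big)$.

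To get this bound, expand $(s-1)^{p^k}=t^{p^k}=c$ and use Kummer's theorem $v_p\binom{p^k}{i}=k-v_p(i)$ for $0<i<p^k$: isolating the leading term and grouping the rest by $v_p(i)$ yields, in $B$,
\[ s^{p^k}=p\,s^{p^{k-1}}w+p^2\theta \]
for some unit $w$ and some $\theta\in B$. Put $N_l:=p^{k-1}(l(p-1)+1)$ and $N_0:=p^{k-1}$. I would establish by induction on $l$ the \emph{strengthened} claim that $s^{N_{l-1}}\in p^{l-1}\big(s^{p^{k-1}}\big)$ in $\mathbb{Z}_{p^l}[t]/(t^{p^k}-c)$, which is trivial for $l=1$ and whose inductive step reduces mod $p^{l-1}$, invokes the claim, lifts back, multiplies by $s^{N_{l-1}-N_{l-2}}=s^{p^{k-1}(p-1)}$, and again uses the displayed relation for $s^{p^k}$. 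Granting this claim for all $l$, multiplying $s^{N_{l-1}}$ by $s^{N_l-N_{l-1}}=s^{p^{k-1}(p-1)}$ and substituting once more puts $s^{N_l}$ in $p^lB=0$, which is the bound. I expect this induction to be the main obstacle: tracking only the nilpotency index gives merely the crude bound $p^kl$, and one needs the exact binomial valuations together with the sharpened invariant — the ``extra'' factor $s^{p^{k-1}}$ forces the error produced when passing from $\mathbb{Z}_{p^{l-1}}$ to $\mathbb{Z}_{p^l}$ to retain that factor and be absorbed, which is what yields the saving of $p^{k-1}(l-1)$ over the crude bound. The case $p=2$ is arithmetically lighter, since there $l(p-1)+1=l+1$.
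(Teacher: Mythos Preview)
Your argument is correct and takes a genuinely different route from the paper. The paper works entirely with the coefficient array $a_{r,s}$ of $D^r(0,\dots,0,1)$: for (1) it uses the predecessor criterion $x_1-x_2+\cdots-x_n\equiv 0$ directly; for (2) and (3) it writes $a_{d+1,s}\equiv a_{1,s}+\delta_s\frac{m}{p}$ and controls the $\delta_s$ through explicit binomial identities (Lemmas on $\binom{p^k-1}{j}$ and $\binom{p^k-p^{k-1}}{j}$ mod $p$) together with a reduction $a_{cp^k,bp^k+1}\equiv a^*_{c,b+1}\pmod p$ to the $n_1$-coefficient system; and (4) is done by a short CRT argument on the $a_{r,s}$. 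By contrast, you pass to $R_m=\mathbb{Z}_m[t]/(t^n-1)$, identify $L_m(n)$ with the pre-period of $(1+t)^\alpha$, and then use CRT on $m$ and cyclotomic factorisation of $t^n-1$ to localise to the single factor $B=\mathbb{Z}_{p^l}[t]/(t^{p^k}-c)$, where the problem becomes computing the nilpotency index of $s=1+t$.

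The payoff of your approach is structural clarity: (4) drops out immediately, (1) and (2) become one-line observations about when $1+t$ is a unit, and (3) is reduced to a clean statement about one local ring. Your key relation $s^{p^k}=ps^{p^{k-1}}w+p^2\theta$ (from Kummer's valuation $v_p\binom{p^k}{i}=k-v_p(i)$, factoring out the $j=1$ term $s^{p^{k-1}}$ with unit constant coefficient and absorbing all $v_p\ge 2$ terms into $p^2\theta$) is correct, and the strengthened inductive invariant $s^{N_{l-1}}\in p^{l-1}(s^{p^{k-1}})$ in $B_l$ does go through exactly as you outline: lifting the level-$(l-1)$ statement introduces a $p^{l-1}v$ error, but after multiplying by $s^{p^{k-1}(p-1)}$ that error carries a factor $s^{p^{k-1}(p-1)}\in(s^{p^{k-1}})$, while the main term becomes $p^{l-2}s^{p^k}\tilde u$ and the displayed relation pushes it into $p^{l-1}(s^{p^{k-1}})$ since $p^l=0$. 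The paper's approach, on the other hand, is more elementary in that it never leaves explicit coefficient calculations, and its machinery (the Lemmas on $a_{cp^k,s}$) would also be the natural tool if one wanted to attack the conjectured equality $L_m(n)=p^{k-1}(l(p-1)+1)$, which would amount in your language to showing the nilpotency index of $s$ in $B$ is exactly $N_l$, i.e.\ that $s^{N_l-1}\ne 0$.
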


\indent It is worth noting that in part (3) of Theorem \ref{MainTheorem}, we believe 
\[L_m(n) = p^{k-1}(l(p-1)+1).\]
 We are able to confirm this is true for all  $m \leq 50$ and even $n \leq 20$ where $gcf(n,m)$ is a power of a prime. We would test for larger $n,m$, but our program for computing $L_m(n)$ requires first finding $P_m(n)$, which typically gets larger as $n,m$ increase. The values of $P_m(n)$ get too large for our Matlab program to find. 

\indent The work in this paper was done while the second author was a Ph.D. student at Kent State University under the advisement of the first author and will appear as part of the second author's dissertation. 

\section{Background}
\indent The Ducci function is originally defined as an endomorphism on $\mathbb{Z}^n$ or $(\mathbb{Z}^+ \cup \{0\})^n$ such that $\bar{D}(x_1, x_2, ..., x_n)=(|x_1-x_2|, x_2-x_3|, ...., |x_n-x_1|)$, and is the most common definition of the Ducci function. Some examples of papers that define the Ducci function this way are \cite{Breuer1, Ehrlich, Freedman, Glaser, Furno}. Note that if $\bar{D}$ is defined on $\mathbb{Z}^n$, then $\bar{D}(\mathbf{u}) \in (\mathbb{Z}^+ \cup \{0\})^n$. Therefore, for simplicity, we will refer to both of these cases as Ducci on $\mathbb{Z}^n$. Other papers, including \cite{Brown, Chamberland, Schinzel}, use the same formula for $\bar{D}$ but define Ducci on $\mathbb{R}^n$. It is of course necessary that we handle the cases of Ducci on $\mathbb{Z}^n$ and on $\mathbb{R}^n$ separately.

\indent Ducci sequences in both of these cases also always enter a cycle. There are discussions of why this happens on $\mathbb{Z}^n$ in \cite{Burmester, Ehrlich, Glaser, Furno} and Theorem 2 of \cite{Schinzel} proves it for Ducci on $\mathbb{R}^n$. A well known fact for Ducci on $\mathbb{Z}^n$ is proved in Lemma 3 of \cite{Furno}: all of the entries of a tuple in a Ducci cycle belong to $\{0,c\}$ for some $c \in \mathbb{Z}^+$. Since $\bar{D}(\lambda \mathbf{u})=\lambda \bar{D}(\mathbf{u})$ for all $\mathbf{u} \in \mathbb{Z}^n$, this means that we can focus on when Ducci is defined on $\mathbb{Z}_2$ where we are using our definition of $D$ given at the beginning of the paper, particularly when examining Ducci cycles. Theorem 1 of \cite{Schinzel} proves a similar finding for Ducci on $\mathbb{R}^n$, namely that if the Ducci sequence reaches a limit point, then all of the entries of that limit point belong to $\{0,c\}$, where this time we have $c \in \mathbb{R}$.

\indent Because of the significance of Ducci on $\mathbb{Z}_2^n$ to the original Ducci case, this leads us to wonder what happens if we define Ducci on $\mathbb{Z}_m^n$ using our definition given at the beginning of the paper. The first paper to look at Ducci on $\mathbb{Z}_m^n$ is \cite{Wong}, and is also examined in \cite{Breuer1, Breuer2, Dular}.

%\indent We also have that there are some findings from papers on Ducci on $\mathbb{Z}^n$ that work as our $m=2$ case. 
\indent There are a few classical results from the case of Ducci on $\mathbb{Z}^n$ that apply to the case when $m=2$.
When looking at $L_2(n)$ in particular, \cite{Ehrlich} is the first to give that $L_2(n)=1$ when $n$ is odd on page 303. When $n$ is even, Theorem 6 of \cite{Glaser} gives that if $n=2^{k_1}+2^{k_2}$ where $k_1>k_2 \geq 0$, then $L_2(n)=2^{k_2}$. This is then extended by Theorem 4 of \cite{Breuernote} to all even $n=2^kn_1$ where $n_1$ is odd. Here, $L_2(n)=2^k$. Notice that this supports part 2 of Theorem \ref{MainTheorem} when $m=p=2$.

\indent If we allow $n$ to be odd, we have a formula for $L_m(n)$ from Theorem 2 of \cite{Paper3}. Specifically, if $m=2^lm_1$ where $n,m_1$ are odd, then $L_m(n)=l$.
For a case where $n$ is even, Theorem 2 of \cite{Paper1}, proves that if $n=2^k$ and $m=2^l$, then $L_m(n)=2^{k-1}(l+1)$. 

\indent Before moving on, we would also like to give a few definitions that will be useful later. If $\mathbf{u}, \mathbf{v} \in \mathbb{Z}_m^n$, then $\mathbf{v}$ is a \textbf{predecessor} to $\mathbf{u}$ if $D(\mathbf{v})=\mathbf{u}$. 

\indent We let $\mathbf{K(\mathbb{Z}_m^n)}$ be the set of all tuples in a Ducci cycle. On page 6001 of \cite{Breuer1}, it is stated that $K(\mathbb{Z}_m^n)$ is a subgroup of $\mathbb{Z}_m^n$. A proof of this is provided in Theorem 1 of \cite{Paper1}.

\indent We now consider another example of a Ducci sequence and its cycle. For this, we will look at the basic Ducci sequence of $\mathbb{Z}_2^6$. We create a transition graph that not only gives the basic Ducci sequence, but also gives all tuples whose sequence has tuples in the basic Ducci sequence. This transition graph is given in Figure \ref{Example}.

\begin{figure}
\centering
\resizebox{.95\textwidth}{!}{%}
\begin{tikzpicture}[node distance={30mm}, thick, main/.style = {draw, circle}]

%cycle nodes
\node[main](1){$(0,0,0,1,0,1)$};
\node[main](2)[right of=1]{$(0,0,1,1,1,1)$};
\node[main](3)[below right of =2]{$(0,1,0,0,0,1)$};
\node[main](4)[below left of=3]{$(1,1,0,0,1,1)$};
\node[main](5)[left of=4]{$0,1,0,1,0,0)$};
\node[main](6)[above left of=5]{$(1,1,1,1,0,0)$};

%Branch on $(0,0,0,1,0,1)$
\node[main](7)[above left of=1]{$(0,0,0,0,1,1)$};
\node[main](8)[above of=7]{$(0,0,0,0,0,1)$};
\node[main](9)[left of=7]{$(1,1,1,1,1,0)$};

%Branch on $(0,0,1,1,1,1)$
\node[main](10)[above right of=2]{$(1,1,1,0,1,1)$};
\node[main](11)[above of=10]{$(0,1,0,1,1,0,1)$};
\node[main](12)[right of=10]{$(1,0,1,0,0,1,0)$};

%Branch on $(0,1,0,0,0,1)$
\node[main](13)[right of=3]{$(1,1,0,0,0,0)$};
\node[main](14)[above right of=13]{$(0,1,0,0,0,0)$};
\node[main](15)[below right of=13]{$(1,0,1,1,1,1)$};

%Branch on $(1,1,0,0,1,1)$
\node[main](16)[below right of=4]{$(1,0,1,1,1,0)$};
\node[main](17)[right of=16]{$(0,1,1,0,1,0)$};
\node[main](18)[below of =16]{$(1,0,0,1,0,1)$};

%Branch on $(0,1,0,1,0,0)$
\node[main](19)[below left of=5]{$(0,0,1,1,0,0)$};
\node[main](20)[left of=19]{$(0,0,0,1,0,0)$};
\node[main](21)[below of=19]{$(1,1,1,0,1,1)$};

%Branch on $(1,1,1,1,0,0)$
\node[main](22)[left of=6]{$(1,0,1,0,1,1)$};
\node[main](23)[above left of=22]{$(0,1,1,0,0,1)$};
\node[main](24)[below left of=22]{$(1,0,0,1,1,0)$};

%Draw in arrows
%Cycle arrows
\draw[->](1)--(2);
\draw[->](2)--(3);
\draw[->](3)--(4);
\draw[->](4)--(5);
\draw[->](5)--(6);
\draw[->](6)--(1);

%Branch on $(0,0,0,1,0,1)$
\draw[->](7)--(1);
\draw[->](8)--(7);
\draw[->](9)--(7);

%Branch on $(0,0,1,1,1,1)$
\draw[->](10)--(2);
\draw[->](11)--(10);
\draw[->](12)--(10);

%Branch on $(0,1,0,0,0,1)$
\draw[->](13)--(3);
\draw[->](14)--(13);
\draw[->](15)--(13);

%Branch on $(1,1,0,0,1,1)$
\draw[->](16)--(4);
\draw[->](17)--(16);
\draw[->](18)--(16);

%Branch on $(0,1,0,1,0,0)$
\draw[->](19)--(5);
\draw[->](20)--(19);
\draw[->](21)--(19);

%Branch on $(1,1,1,1,0,0)$
\draw[->](22)--(6);
\draw[->](23)--(22);
\draw[->](24)--(22);

\end{tikzpicture}
}
\caption{Transition Graph for $\mathbb{Z}_2^6$}\label{Example}
\end{figure}
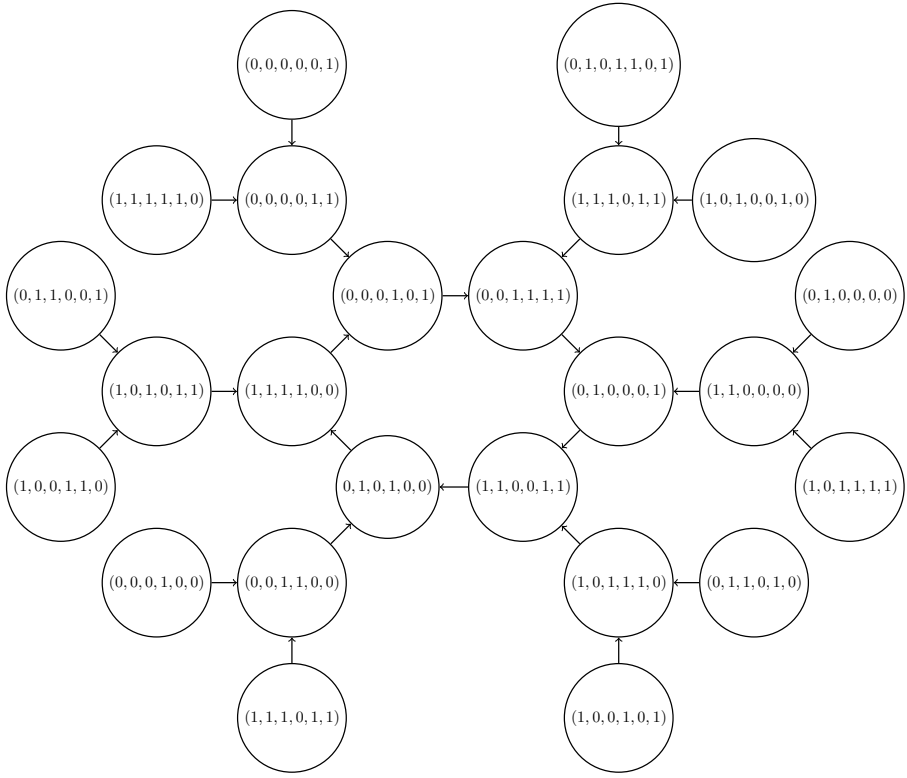

%\indent If Theorem \ref{MainTheorem} is true, we should have that $L_2(6)=2$, which we can see is true from the transition graph. We can also see that $P_2(6)=6$. 

\indent Notice that we can determine that $L_2(6)=2$ from Figure \ref{Example}, which agrees with Theorem \ref{MainTheorem}. We can also see that $P_2(6)=6$. 

\indent It is worth noting that every tuple in Figure \ref{Example} that has a predecessor has exactly two, which is the same as our $m$ in this case. This is because of Theorem 4 in \cite{Paper1}, which says that if $n$ is even, then every tuple that has a predecessor has exactly $m$ predecessors. This theorem also tells us that if $\mathbf{u} \in \mathbb{Z}_m^n$ has a predecessor, call it $(x_1, x_2, ..., x_n)$, then the remaining predecessors are of the form 
\[(x_1+z, x_2-z, x_3+z, ..., x_n-z)\]
 for some $z \in \mathbb{Z}_m$ and all tuples of this form are a predecessor to $\mathbf{u}$. 
 
 \indent All of the tuples in the transition graph in Figure \ref{Example} that have a predecessor, call them $(x_1, x_2, ..., x_n)$, also satisfy the condition that $x_1-x_2+x_3- \cdots -x_n \equiv 0 \; \text{mod} \; 2$. In fact,we have the following theorem about this:
 
 \begin{theorem} \label{n_even_m_odd_preds}
Let $n$ be even. $(x_1, x_2,..., x_n) \in \mathbb{Z}_m^n$ has a predecessor, if and only if $x_1-x_2+x_3-x_4+ \cdots+x_{n-1}-x_n \equiv 0 \; \text{mod} \; m$
\end{theorem}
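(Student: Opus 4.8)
The plan is to work directly with the system of equations defining a predecessor. Write $\mathbf{x}=(x_1,\ldots,x_n)$, and adopt the cyclic index convention $x_{n+1}=x_1$, $y_{n+1}=y_1$. A tuple $\mathbf{y}=(y_1,\ldots,y_n)$ is a predecessor of $\mathbf{x}$ precisely when $y_i+y_{i+1}\equiv x_i \pmod m$ for all $1\le i\le n$. So the whole theorem reduces to: this linear system over $\mathbb{Z}_m$ is solvable in $\mathbf{y}$ if and only if the alternating sum of the $x_i$ vanishes mod $m$.

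For the forward direction I would substitute $x_i=y_i+y_{i+1}$ into $x_1-x_2+x_3-\cdots-x_n=\sum_{i=1}^n(-1)^{i+1}x_i$ and collect terms. Each $y_j$ appears in exactly two summands: in the $x_j$ term with sign $(-1)^{j+1}$ and in the $x_{j-1}$ term with sign $(-1)^{j}$, where for $y_1$ the second occurrence is in the $x_n$ term and contributes $(-1)^{n+1}=-1$ since $n$ is even. The two signs are always opposite, so every $y_j$ cancels and the alternating sum is $\equiv 0\pmod m$.

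For the converse I would construct a predecessor explicitly. Fix any $t\in\mathbb{Z}_m$, set $y_1=t$, and define $y_{i+1}=x_i-y_i$ for $i=1,\ldots,n-1$; an easy induction yields the closed form $y_{k+1}=(-1)^k t+\sum_{j=1}^{k}(-1)^{k-j}x_j$. By construction the first $n-1$ equations $y_i+y_{i+1}\equiv x_i$ hold automatically, so the only condition left to check is the wrap-around equation $y_n+y_1\equiv x_n\pmod m$. Substituting the closed form for $y_n$ and using $n$ even (so $(-1)^{n-1}=-1$), the coefficient of $t$ cancels and $y_n+y_1$ collapses to $x_1-x_2+\cdots+x_{n-1}$; hence $y_n+y_1\equiv x_n$ is equivalent to $x_1-x_2+\cdots+x_{n-1}-x_n\equiv 0\pmod m$, which is exactly the hypothesis. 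Therefore $\mathbf{y}$ is a predecessor of $\mathbf{x}$.

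The only genuine subtlety, and the point where the parity of $n$ is indispensable, is the sign bookkeeping at the wrap-around: in the forward direction it is what makes the extra occurrence of $y_1$ cancel rather than double, and in the converse it is what kills the coefficient of $t$ in the final equation. I would therefore state the cyclic index conventions up front and handle the $y_1$ (resp. $t$) term as a separate, explicit case. It is also worth remarking that the construction in the converse produces one predecessor for each $t\in\mathbb{Z}_m$, so this recovers the count of exactly $m$ predecessors from Theorem~4 of \cite{Paper1}.
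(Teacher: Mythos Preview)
Your proof is correct and follows essentially the same approach as the paper's: both directions work directly with the defining system $y_i+y_{i+1}\equiv x_i$, and your explicit construction $y_1=t$, $y_{i+1}=x_i-y_i$ specializes to the paper's choice $y_1=0$. The only cosmetic differences are that in the forward direction you substitute into the alternating sum and cancel in one stroke (whereas the paper telescopes the equations one at a time), and in the converse you keep the parameter $t$ free, which nicely recovers the count of $m$ predecessors.
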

We first note that \cite{Furno} proves in Lemma 5 that this is true for $m=2$ and \cite{Breuer1} proves it is true when $m$ is prime in Lemma 4.
\begin{proof}[Proof of Theorem \ref{n_even_m_odd_preds}]
$(\Rightarrow)$ Assume $(x_1, x_2, ..., x_n)$ has a predecessor $(y_1, y_2, ..., y_n)$. Then 
\[y_1+y_2\equiv x_1 \; \text{mod} \; m\]
\[y_2+y_3 \equiv x_2 \; \text{mod} \; m\]
\[\vdots\]
\begin{equation}\label{Eq_pred}
y_n+y_1 \equiv x_n \; \text{mod} \; m.
\end{equation}
Subtracting the second equation from the first, we get
\[y_1-y_3 \equiv x_1-x_2 \; \text{mod} \; m.\]
Adding this to the third equation, we get
\[y_1+y_4 \equiv x_1-x_2+x_3 \; \text{mod} \; m.\]
Continuing, we get 
\[y_1-y_{n-1} \equiv x_1-x_2+ \cdots -x_{n-2} \; \text{mod} \; m\]
and
\[y_1+y_n \equiv x_1-x_2+ \cdots +x_{n-1} \; \text{mod} \; m.\]
Using this and Equation (\ref{Eq_pred}), we see
\[x_n \equiv x_1-x_2+ \cdots +x_{n-1} \; \text{mod} \; m,\]
which gives
\[x_1-x_2+\cdots +x_{n-1}-x_n \equiv 0 \; \text{mod} \; m,\]
and $(\Rightarrow)$ follows.

$(\Leftarrow)$ Assume $x_1-x_2+x_3-x_4+ \cdots+x_{n-1}-x_n \equiv 0 \; \text{mod} \; m$. It suffices to show that there exists $y_1, y_2, ..., y_n$ that satisfy the equations
\[y_1+y_2\equiv x_1 \; \text{mod} \; m\]
\[y_2+y_3 \equiv x_2 \; \text{mod} \; m\]
\[\vdots\]
\[y_n+y_1 \equiv x_n \; \text{mod} \; m.\]
Let $y_1=0$. If we let $y_2=x_1$, then the first equation will be satisfied. If we then let $y_3=x_2-x_1$, the second equation will be satisfied. If we continue this following the structure $y_j=x_{j-1}-x_{j-2}+ \cdots +x_1$ when $j$ is even and $y_j=x_{j-1}-x_{j-2}+ \cdots -x_1$ when $j$ is odd, then the first $n-1$ equations will be satisfied. 

\indent This would result in $y_n=x_{n-1}-x_{n-2}+ \cdots +x_1$. We then see that 
\[y_1+y_n \equiv x_{n-1}-x_{n-2}+ \cdots +x_1 \; \text{mod} \; m\]
By assumption, $x_{n-1}-x_{n-2}+ \cdots +x_1 \equiv x_n\; \text{mod} \; m$ and $y_1+y_n \equiv x_n \; \text{mod} \; m$ follows, and the final equation is satisfied. Therefore, $(x_1, x_2, ..., x_n)$ has at least one predecessor, $(y_1, y_2, ..., y_n)$. 
%is a predecessor of $(x_1, x_2, ..., x_n)$.
\end{proof}

\indent Moving forward, if $\mathbf{u} \in \mathbb{Z}_m^n$, it will be useful to have a tool that gives us information about tuples in the Ducci sequence of $\mathbf{u}$. To do this, let us first look at the first few tuples in the Ducci sequence of a tuple $(x_1, x_2, ..., x_n) \in \mathbb{Z}_m^n$:

\[(x_1, x_2, ..., x_n)\]
     \[(x_1+x_2, x_2+x_3, ..., x_n+x_1)\]
     \[(x_1+2x_2+x_3, x_2+2x_3+x_4, ..., x_n+2x_1+x_2\]
     \[(x_1+3x_2+3x_3+x_4, x_2+3x_3+3x_4+x_5, ..., x_n+3x_1+3x_2+x_3)\]
     \[\vdots\]   
     
   We can see that the coefficients on the $x_j$ in the first entry of each tuple in the Ducci sequence occurs in all of the entries of that tuple for some $x_i$. For this reason, we will let $a_{r,s}$ represent the coefficient that is on $x_s$ in the first entry of $D^r(x_1, x_2, ..., x_n)$. Here $r \geq 0$ and $1 \leq s \leq n$. The $s$ coordinate reduces modulo $n$, with the exception that we write $a_{r,n}$ instead of $a_{r,0}$. Note that $a_{r,s}$ also appears as the coefficient on $x_{s-i+1}$ in the $i$th entry of $D^r(x_1, x_2, ..., x_n)$. Page 6 of \cite{Paper1} provides a more thorough explanation of why we can do this. We also have that 
   \[D^r(0,0,...,0,1)=(a_{r,n}, a_{r, n-1}, ..., a_{r,1}).\]
   
   Additionally, by Theorem 5 of \cite{Paper1}, when $r<n$, $a_{r,s}=\displaystyle{\binom{r}{s-1}}$. This theorem also tells us that $a_{r+t,s}=\displaystyle{\sum_{i=1}^n a_{t,i}a_{r,s-i+1}}$ when $t \geq 1$ and $a_{r,s}=a_{r-1,s}+a_{r-1,s-1}$.

\section{Proving the Main Theorem}
Before we can prove Theorem \ref{MainTheorem}, there are a few lemmas that we will need. We begin with two lemmas about particular binomial coefficients. We believe that both Lemmas \ref{binom_coeff_p^a-1} and \ref{binom_power_of_p_diff} are known and have been proven before, but we are providing proofs here for completeness.

\begin{lemma}\label{binom_coeff_p^a-1}
    Let $p$ be prime and $j \leq p^k-1$, $k \geq 1$. Then 
    \[\binom{p^k-1}{j} \equiv (-1)^j \; \text{mod} \; p.\]
\end{lemma}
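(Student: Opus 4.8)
The plan is to prove the congruence $\binom{p^k-1}{j} \equiv (-1)^j \pmod p$ by a direct induction on $j$, using the standard Pascal recurrence $\binom{p^k-1}{j} = \binom{p^k-1}{j-1} + \binom{p^k-2}{j-1}$ together with the fact that the ``middle'' binomial coefficients $\binom{p^k}{i}$ vanish mod $p$ for $1 \le i \le p^k-1$. Concretely, from $\binom{p^k}{j} = \binom{p^k-1}{j} + \binom{p^k-1}{j-1}$ and $\binom{p^k}{j} \equiv 0 \pmod p$ (for $1 \le j \le p^k-1$), we immediately get $\binom{p^k-1}{j} \equiv -\binom{p^k-1}{j-1} \pmod p$; the base case $\binom{p^k-1}{0} = 1 = (-1)^0$ then launches the induction and yields $\binom{p^k-1}{j} \equiv (-1)^j \pmod p$ for all $0 \le j \le p^k-1$.

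The one ingredient that needs its own justification is the claim that $p \mid \binom{p^k}{i}$ for $1 \le i \le p^k - 1$. I would dispatch this with the usual Kummer/Legendre $p$-adic valuation argument: writing $v_p$ for the $p$-adic valuation, one has $v_p\!\left(\binom{p^k}{i}\right) = v_p(p^k!) - v_p(i!) - v_p((p^k-i)!)$, and since $i < p^k$ the valuation $v_p(i!) = \sum_{t \ge 1} \lfloor i/p^t \rfloor$ is strictly less than $\sum_{t=1}^{k} p^{k-t} = v_p(p^k!)$ minus $v_p((p^k-i)!)$ — more cleanly, Kummer's theorem says $v_p\!\left(\binom{p^k}{i}\right)$ equals the number of carries when adding $i$ and $p^k - i$ in base $p$, and since their sum is $p^k = (1\,0\,\cdots\,0)_p$ there is at least one carry, so the valuation is positive. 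Alternatively, and perhaps most elementary for the paper, one can cite the identity $i\binom{p^k}{i} = p^k\binom{p^k-1}{i-1}$, which shows $p^k \mid i\binom{p^k}{i}$; since $1 \le i \le p^k-1$ forces $v_p(i) \le k-1 < k$, it follows that $p \mid \binom{p^k}{i}$.

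An alternative route, if one prefers to avoid even the middle-coefficient lemma, is to invoke Lucas' theorem directly: writing $j = \sum_{t=0}^{k-1} j_t p^t$ in base $p$ and noting $p^k - 1 = \sum_{t=0}^{k-1} (p-1)p^t$, Lucas gives $\binom{p^k-1}{j} \equiv \prod_{t=0}^{k-1} \binom{p-1}{j_t} \pmod p$, and then $\binom{p-1}{j_t} \equiv (-1)^{j_t} \pmod p$ (itself the $k=1$ case, provable by the same one-line Pascal argument) combined with $\sum_t j_t \equiv j \pmod 2$... but wait, that last step fails in general since the digit sum need not match $j$ mod $2$. So the clean Lucas approach actually does not work, which is a good reason to stick with the direct telescoping induction described above.

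I do not anticipate a genuine obstacle here — the result is elementary and, as the authors note, surely classical. The only point requiring care is making sure the range $1 \le j \le p^k - 1$ is exactly the range where $p \mid \binom{p^k}{j}$, so that the recurrence $\binom{p^k-1}{j} \equiv -\binom{p^k-1}{j-1}$ is valid at every step of the induction (including $j = p^k-1$, which is covered), and handling the base case $j=0$ separately. I would write the proof in three short sentences: state the middle-coefficient fact with a one-line proof via $i\binom{p^k}{i} = p^k\binom{p^k-1}{i-1}$, apply Pascal to get the sign-flip recurrence, and conclude by induction from $j=0$.
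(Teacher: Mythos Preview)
Your approach is essentially identical to the paper's: induction on $j$ with base case $j=0$, using Pascal's identity $\binom{p^k}{j}=\binom{p^k-1}{j}+\binom{p^k-1}{j-1}$ together with $p\mid\binom{p^k}{j}$ for $1\le j\le p^k-1$ to obtain the sign-flip recurrence $\binom{p^k-1}{j}\equiv -\binom{p^k-1}{j-1}\pmod p$. You actually supply more than the paper does by justifying the middle-coefficient divisibility via $i\binom{p^k}{i}=p^k\binom{p^k-1}{i-1}$, whereas the paper simply asserts it; incidentally, your dismissal of the Lucas route is too hasty, since for odd $p$ the base-$p$ digit sum of $j$ has the same parity as $j$ (each $p^t$ is odd), and for $p=2$ both sides are $1\pmod 2$, so Lucas does in fact give a valid alternative.
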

\begin{proof}
To prove this, we will do induction on $j$.

\textbf{Basis Step} $\mathbf{j=0}$:  Note that  this is satisfied by $\displaystyle{\binom{p^k-1}{0}}=1$.

 \textbf{Inductive Step:} Now assume that $\displaystyle{\binom{p^k-1}{j-1}} \equiv (-1)^{j-1}  \; \text{mod} \; p$. Note 
    \[\binom{p^k}{j} =\binom{p^k-1}{j}+\binom{p^k-1}{j-1}.\]
    Because we proved the $j=0$ case in the basis step, we can assume $0< j \leq p^k-1$, which will give us 
    \[\binom{p^k-1}{j}+\binom{p^k-1}{j-1} \equiv 0 \; \text{mod} \; p.\]
    Solving for $\displaystyle{\binom{p^k-1}{j}}$, we get
    \[\binom{p^k-1}{j} \equiv -\binom{p^k-1}{j-1} \; \text{mod} \; p.\]
    By induction, this is equivalent to
    \[ -(-1)^{j-1} \; \text{mod} \; p\]
    or
    \[ (-1)^j \; \text{mod} \; p,\]
    and the lemma follows. 

\end{proof}

The next lemma is similar to Lemma \ref{binom_coeff_p^a-1}:

\begin{lemma}\label{binom_power_of_p_diff}
For $k > 1$ and $p$ an odd prime
    \[\binom{p^k-p^{k-1}}{j}\equiv 
    \begin{cases}
    
    (-1)^c \; \text{mod} \; p & j=cp^{k-1}\\
        0\; \text{mod} \; p & otherwise\\
    \end{cases}.\]
\end{lemma}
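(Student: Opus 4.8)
The plan is to use Lucas' theorem, which expresses a binomial coefficient modulo $p$ in terms of the base-$p$ digits of the numerator and denominator. First I would write $p^k - p^{k-1} = (p-1)p^{k-1}$, so in base $p$ this number has digits $(p-1, 0, 0, \dots, 0)$: the coefficient of $p^{k-1}$ is $p-1$ and all lower digits are $0$. Now write $j$ in base $p$ as $j = \sum_{i=0}^{k-1} d_i p^i$ with $0 \le d_i \le p-1$. By Lucas' theorem,
\[\binom{p^k-p^{k-1}}{j} \equiv \binom{p-1}{d_{k-1}}\prod_{i=0}^{k-2}\binom{0}{d_i} \pmod p.\]
The factor $\binom{0}{d_i}$ is $1$ if $d_i = 0$ and $0$ otherwise. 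So the whole product vanishes mod $p$ unless $d_0 = d_1 = \cdots = d_{k-2} = 0$, i.e. unless $j = d_{k-1}p^{k-1}$ is a multiple of $p^{k-1}$. This matches the ``otherwise'' case of the claimed formula.

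In the remaining case $j = c p^{k-1}$ with $0 \le c \le p-1$ (the range forced by $j \le p^k - p^{k-1}$ being implicit, though one should note $c$ can be taken mod $p$ in general), we are left with $\binom{p^k-p^{k-1}}{cp^{k-1}} \equiv \binom{p-1}{c} \pmod p$. Then I would invoke Lemma \ref{binom_coeff_p^a-1} with $k=1$, which gives $\binom{p-1}{c} \equiv (-1)^c \pmod p$, completing the proof.

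Alternatively, to keep the paper self-contained in case the authors prefer not to cite Lucas' theorem, one can mimic the inductive style of Lemma \ref{binom_coeff_p^a-1}: use the Pascal recurrence $\binom{p^k - p^{k-1}}{j} = \binom{p^k-p^{k-1}-1}{j} + \binom{p^k-p^{k-1}-1}{j-1}$ together with the fact that, since $p$ is odd, $p^k - p^{k-1} - 1$ has base-$p$ digit string $(p-2, p-1, p-1, \dots, p-1)$, and push an induction through. The main obstacle in that approach is the bookkeeping of which intermediate binomial coefficients vanish mod $p$; the Lucas-theorem route sidesteps this entirely and is the one I would write up, noting that the hypothesis $p$ odd is what guarantees $\binom{p-1}{c}$ behaves as stated (and is used downstream when the formula is applied), while the case $p=2$ would instead collapse to the statement of Lemma \ref{binom_coeff_p^a-1}.
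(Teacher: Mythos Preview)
Your argument via Lucas' theorem is correct: writing $p^k-p^{k-1}=(p-1)\cdot p^{k-1}$ in base $p$ and applying Lucas gives exactly the vanishing for $j\not\equiv 0\pmod{p^{k-1}}$, and the surviving factor $\binom{p-1}{c}\equiv(-1)^c\pmod p$ is precisely Lemma~\ref{binom_coeff_p^a-1} at $k=1$.

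This is a genuinely different route from the paper. The authors instead run induction on $j$, using the Chu--Vandermonde identity
\[
\binom{p^k}{j}=\sum_{i=0}^{j}\binom{p^{k-1}}{i}\binom{p^k-p^{k-1}}{j-i}
\]
together with the fact that $\binom{p^k}{j}\equiv 0\pmod p$ for $0<j<p^k$ and that $\binom{p^{k-1}}{i}\equiv 0\pmod p$ except at $i=0,p^{k-1}$. This yields the two-term recurrence $\binom{p^k-p^{k-1}}{j}\equiv -\binom{p^k-p^{k-1}}{j-p^{k-1}}\pmod p$, from which both the vanishing and the $(-1)^c$ follow by induction. Your Lucas approach is shorter and entirely in keeping with the paper's own toolbox (Lucas' theorem is already invoked, with the same reference \cite{Fine}, in the proof of Theorem~\ref{MainTheorem}); the Chu--Vandermonde argument has the minor advantage of making the recursion in $c$ explicit, which echoes the inductive structure used elsewhere in the paper, but there is no mathematical reason to prefer it here.
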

\begin{proof}
   We prove this via induction on $j$.
   
    \textbf{Basis steps} $\mathbf{j=0,1}$: These follow from
   \[\binom{p^k-p^{k-1}}{0} \equiv 1 \; \text{mod} \; p\]
   and
   \[\binom{p^k-p^{k-1}}{1} \equiv 0 \; \text{mod} \; p.\]
   
   \textbf{Inductive step:} Assume that the lemma is true for $j^*<j$. For $j>0$, we use the Chu-Vandermonde identity, a proof of which can be found in \cite{Spivey} to yield
   \begin{equation}\label{Eq_p_binom}
   \binom{p^k}{j}=\sum_{i=0}^j \binom{p^{k-1}}{i} \binom{p^k-p^{k-1}}{j-i}\equiv 0 \; \text{mod} \; p.
   \end{equation}
   If $j < p^{k-1}$, the sum in (\ref{Eq_p_binom}) is also equivalent to
   \[\binom{p^{k-1}}{0}\binom{p^k-p^{k-1}}{j} \; \text{mod} \; p,\]
   which is congruent to $0 \; \text{mod} \; p$, so $\displaystyle{\binom{p^k-p^{k-1}}{j}} \equiv 0 \; \text{mod} \; p$.
   
\indent If $j \geq p^{k-1}$, the sum in (\ref{Eq_p_binom}) is equivalent to
   \[\binom{p^{k-1}}{0}\binom{p^k-p^{k-1}}{j}+\binom{p^{k-1}}{p^{k-1}}\binom{p^k-p^{k-1}}{j-p^{k-1}}.\]
   This is congruent to $0 \; \text{mod} \; p$ by Equation (\ref{Eq_p_binom}), we have
   \[\binom{p^k-p^{k-1}}{j} \equiv -\binom{p^k-p^{k-1}}{j-p^{k-1}} \; \text{mod} \; p.\]
   If $j \neq cp^{k-1}$ for some $c$, then $\displaystyle{\binom{p^k-p^{k-1}}{j}}$ is congruent to $0 \; \text{mod} \; p$ by induction. If $j=cp^{k-1}$ for some $c$, then $\displaystyle{\binom{p^k-p^{k-1}}{j}}$ is equivalent to $-(-1)^{c-1} \; \text{mod} \; p$ or $(-1)^c \; \text{mod} \; p$ and the lemma follows.
    
\end{proof}

\indent Next, we would like to look at the $a_{r,s}$ coefficient for a particular $r$ in  Lemmas \ref{p_dividing_multiplecoeff} and \ref{coefficient_to_smaller_n_broader}.

\begin{lemma}\label{p_dividing_multiplecoeff}
    Let $n=p^kn_1$ where $p$ is prime, $n_1>1$ and $c \geq 1$. Then for $s \neq bp^k+1$ and $0 \leq b < n_1$,
    $a_{cp^k,s} \equiv 0 \; \text{mod} \; p$
\end{lemma}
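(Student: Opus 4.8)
The plan is to compute $a_{cp^k,s}$ modulo $p$ using the recursive structure for the coefficients recorded at the end of Section 2. The key observation is that $cp^k$ is large (at least $p^k > n$ is false in general, since $n = p^k n_1 > p^k$), so we cannot directly invoke the closed form $a_{r,s} = \binom{r}{s-1}$, which only holds for $r < n$. Instead I would use the multiplicativity-type relation $a_{r+t,s} = \sum_{i=1}^n a_{t,i} a_{r,s-i+1}$ together with the fact that $D^r(0,\dots,0,1) = (a_{r,n}, a_{r,n-1},\dots,a_{r,1})$, so that the full vector of coefficients $(a_{r,1},\dots,a_{r,n})$ at "time" $r$ is obtained by applying $D$ repeatedly. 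The heart of the argument is to understand $D^{p^k}$ acting on $\mathbb{Z}_p^n$: since $D$ on $\mathbb{Z}_p^n$ can be viewed as multiplication by the polynomial $1 + X$ in the ring $\mathbb{Z}_p[X]/(X^n - 1)$, we have that $D^{p^k}$ corresponds to $(1+X)^{p^k} = 1 + X^{p^k}$ in $\mathbb{Z}_p[X]/(X^n-1)$ because of the Freshman's Dream in characteristic $p$ (and $p^k \le n$ so no reduction of exponents is yet needed, or if it is, $X^n = 1$ handles it cleanly).

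Concretely, I would argue as follows. Working in $\mathbb{Z}_p[X]/(X^n - 1)$, identify the tuple $D^r(0,\dots,0,1)$ with the polynomial $\sum_{s=1}^n a_{r,s} X^{n-s}$ (matching the reversal in $D^r(0,\dots,0,1)=(a_{r,n},\dots,a_{r,1})$), so that applying $D$ is multiplication by $1+X$. Then $D^{cp^k}(0,\dots,0,1)$ corresponds to $(1+X)^{cp^k} = (1 + X^{p^k})^c = \sum_{b=0}^{c}\binom{c}{b} X^{bp^k}$ in $\mathbb{Z}_p[X]/(X^n-1)$. Since $n = p^k n_1$, the exponents $bp^k$ for $0 \le b \le c$ may exceed $n$, but they all remain multiples of $p^k$ after reduction mod $n$ (as $\gcd$ reasoning shows $X^n = X^{p^k n_1} = 1$ collapses $X^{bp^k}$ to $X^{(b \bmod n_1)p^k}$). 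Reading off the coefficient of $X^{n-s}$ on both sides: $n - s$ is a multiple of $p^k$ iff $s \equiv n \equiv 0 \pmod{p^k}$ — wait, I need to be careful: $n-s \equiv 0 \pmod{p^k}$ iff $s \equiv 0 \pmod{p^k}$, but the lemma excludes $s = bp^k + 1$, i.e. $s \equiv 1 \pmod{p^k}$. Let me re-examine: the coefficient $a_{cp^k,s}$ sits on $X^{n-s}$; if $s \not\equiv 1 \pmod{p^k}$ then I claim $n-s$ is not of the form $bp^k$; indeed if $a_{r,s}$ corresponds to $x_s$ and in the basic sequence $x_s$ has index... I would pin down the exponent bookkeeping carefully so that "the surviving monomials in $(1+X^{p^k})^c$ mod $(X^n-1)$ are exactly those $X^j$ with $j \equiv 0 \pmod{p^k}$" translates to "$a_{cp^k,s} \equiv 0$ unless $s \equiv 1 \pmod{p^k}$", which after writing $s - 1 = bp^k$ with $0 \le b < n_1$ is precisely the stated exclusion.

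The main obstacle I anticipate is the index/reduction bookkeeping: making the correspondence between tuples and polynomials in $\mathbb{Z}_p[X]/(X^n-1)$ precise enough that the reversal in $D^r(0,\dots,0,1)=(a_{r,n},\dots,a_{r,1})$, the "mod $n$ with $a_{r,n}$ not $a_{r,0}$" convention, and the shift by $1$ in "$s = bp^k + 1$" all line up correctly, and verifying that reducing the exponents $bp^k$ modulo $n$ never produces an exponent that is $\not\equiv 0 \pmod{p^k}$ (which is immediate since $p^k \mid n$). An alternative, more elementary route that avoids the polynomial ring is a direct induction: use $a_{cp^k, s} = \sum_{i=1}^n a_{(c-1)p^k, i}\, a_{p^k, s-i+1}$, combine with the base case $a_{p^k,s} = \binom{p^k}{s-1} \bmod p$ (valid only if $p^k < n$, i.e. $n_1 > 1$, which is exactly our hypothesis) — and $\binom{p^k}{s-1} \equiv 0 \pmod p$ unless $s - 1 \in \{0, p^k\}$ by Lucas — then induct on $c$. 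I would present whichever of these is cleaner, likely the Lucas-plus-induction version since it reuses machinery already in the paper, but I would keep the polynomial picture in mind as the conceptual reason the statement is true.
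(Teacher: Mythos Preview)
Your fallback route---induction on $c$ via the convolution identity and the base case $a_{p^k,s}=\binom{p^k}{s-1}\equiv 0\pmod p$ for $s\notin\{1,p^k+1\}$ (valid precisely because $n_1>1$ forces $p^k<n$)---is exactly the argument the paper gives, down to reducing the sum to the two surviving terms $a_{p^k,1}a_{(c-1)p^k,s}+a_{p^k,p^k+1}a_{(c-1)p^k,s-p^k}$ and invoking the inductive hypothesis.

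The polynomial-ring picture you lead with is a genuinely different and more conceptual route that the paper does not use: realizing $D$ as multiplication by a degree-one binomial in $\mathbb{Z}_p[X]/(X^n-1)$ and applying the Frobenius identity $(1+X)^{p^k}=1+X^{p^k}$ explains in one line why only residues $s\equiv 1\pmod{p^k}$ survive, whereas the paper's induction is purely combinatorial. What the polynomial model buys is a transparent ``why''; what it costs is the bookkeeping you correctly flag. One concrete point there: under the identification $(y_1,\dots,y_n)\leftrightarrow\sum_i y_i X^{i-1}$, the map $D$ is multiplication by $1+X^{n-1}$ (equivalently $1+X^{-1}$), not $1+X$; this is why your exponent check produced $s\equiv 0$ rather than $s\equiv 1$. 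Fixing that shift (or equivalently tracking the extra factor $X^{-cp^k}$) lines everything up with the stated exclusion $s=bp^k+1$.
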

\begin{proof}
    We prove this via induction on $c$. 
    
    \textbf{Basis Step} $\mathbf{c=1}$: 
    \[a_{p^k,s} =\binom{p^k}{s-1} \equiv 0 \; \text{mod} \; p\]
    precisely when $s \neq 1, p^k+1$.
    
    \textbf{Inductive Step:} Assume 
    \[a_{(c-1)p^k,s} \equiv 0 \; \text{mod} \; p\]
     for $s \not \equiv b'p^k+1 \; \text{mod} \; n, \; 0 \leq b' \leq n_1-1$. Using Theorem 5 of \cite{Paper1}
    \[a_{cp^k,s}= \sum_{i=1}^n a_{p^k,i}a_{(c-1)p^k,s-i+1}.\]
    Using $a_{p^k,i} =\displaystyle{\binom{p^k}{i-1}}$, this sum is equivalent to
    \[a_{p^k,1}a_{(c-1)p^k,s}+a_{p^k,p^k+1}a_{(c-1)p^k,s-p^k} \; \text{mod} \; p.\]
    Notice that for $s \not \equiv bp^k+1 \; \text{mod} \; n$, this is $\equiv 0 \; \text{mod} \; p$ where $0 \leq b \leq n_1-1$ by induction, and the lemma follows. 
\end{proof}

For the remaining cases where $s=bp^k+1$, we have this lemma:
\begin{lemma} \label{coefficient_to_smaller_n_broader}
    Let $n=p^kn_1$ for some $k$ and $n_1>1$, $a_{r,s}$ be the coefficients for $n$, and $a^*_{r,s}$ be the coefficients for $n_1$.
     Then, for $c \geq 1$ and $0 \leq b <n_1$,
     \[a_{cp^k,bp^k+1} \equiv a^*_{c,b+1} \; \text{mod} \; p.\]
\end{lemma}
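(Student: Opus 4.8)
The plan is to induct on $c$, paralleling the structure of the proof of Lemma \ref{p_dividing_multiplecoeff} and using the two identities for the coefficients recorded from Theorem 5 of \cite{Paper1}: the convolution $a_{r+t,s}=\sum_{i=1}^n a_{t,i}a_{r,s-i+1}$ (with $t\ge 1$) and the Pascal-type recurrence $a_{r,s}=a_{r-1,s}+a_{r-1,s-1}$.

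For the base case $c=1$: since $n_1>1$ we have $p^k<n$, so $a_{p^k,bp^k+1}=\binom{p^k}{bp^k}$. This is $1$ when $b=0$ or $b=1$ and $0$ when $2\le b<n_1$ (as then $bp^k>p^k$), and $a^*_{1,b+1}=\binom{1}{b}$ takes exactly the same values, so the base case holds. For the inductive step, assume $a_{(c-1)p^k,b'p^k+1}\equiv a^*_{c-1,b'+1}\pmod p$ for all $0\le b'<n_1$, and apply the convolution identity with $t=p^k$, $r=(c-1)p^k$, $s=bp^k+1$. Exactly as in Lemma \ref{p_dividing_multiplecoeff}, $a_{p^k,i}=\binom{p^k}{i-1}$ is $\equiv 0\pmod p$ for all $1\le i\le n$ except $i=1$ and $i=p^k+1$, where it equals $1$; so modulo $p$ the sum collapses to $a_{(c-1)p^k,bp^k+1}+a_{(c-1)p^k,(b-1)p^k+1}$. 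When $b>0$ the induction hypothesis gives $a^*_{c-1,b+1}+a^*_{c-1,b}$, and when $b=0$ the second subscript reduces as $-p^k+1\equiv(n_1-1)p^k+1\pmod{p^k n_1}$, so it again matches the hypothesis with $b'=n_1-1$, giving $a^*_{c-1,1}+a^*_{c-1,n_1}$, which is $a^*_{c-1,b+1}+a^*_{c-1,b}$ under the index convention $a^*_{c-1,0}=a^*_{c-1,n_1}$. In all cases the Pascal recurrence $a^*_{c,b+1}=a^*_{c-1,b+1}+a^*_{c-1,b}$ for the $n_1$-coefficients then yields $a_{cp^k,bp^k+1}\equiv a^*_{c,b+1}\pmod p$, completing the induction.

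The only delicate point is the bookkeeping between the two distinct modular index reductions—subscripts mod $n$ for the $a_{r,s}$ versus mod $n_1$ for the $a^*_{r,s}$—specifically checking that the wrap-around term $(b-1)p^k+1$ appearing in the $n$-coefficient recursion corresponds to the wrap-around term $b=0\mapsto n_1$ in the $n_1$-coefficient recursion; this works precisely because $p^k$ divides $n$ with quotient $n_1$, so $-p^k+1$ and $(n_1-1)p^k+1$ are congruent mod $n$. Everything else is a routine collapse of the convolution using the Kummer/Lucas fact about $\binom{p^k}{j}$ already exploited in Lemma \ref{p_dividing_multiplecoeff}.
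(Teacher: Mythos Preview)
Your proof is correct and follows essentially the same approach as the paper: induction on $c$, with the base case handled via $a_{p^k,bp^k+1}=\binom{p^k}{bp^k}$ and $a^*_{1,b+1}=\binom{1}{b}$, and the inductive step via the convolution identity collapsed modulo $p$ to the two surviving terms, then the Pascal recurrence for $a^*$. The only difference is that you are more explicit about the $b=0$ wrap-around (checking that $-p^k+1\equiv (n_1-1)p^k+1\pmod n$ matches the $n_1$-indexing convention), which the paper leaves implicit in its subscript conventions.
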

\begin{proof}
We prove this via induction on c:

 \textbf{Basis Step} $\mathbf{c=1}$:
Notice that because $p^k<n$, we have
$a_{p^k,1}=\displaystyle{\binom{p^k}{0}}$, which is $1$. Since $a^*_{1,1}=\displaystyle{\binom{1}{0}=1}$, we have that 
\[a_{p^k,1} \equiv a^*_{1,1} \; \text{mod} \; p.\]
Similarly, 
\[a_{p^k,p^k+1}=\binom{p^k}{p^k}=1\]
and 
\[a^*_{1,2}=\binom{1}{1}=1,\]
so $a_{p^k,p^k+1} \equiv a^*_{1,2} \; \text{mod} \; p$. Finally, for $2 \leq b <n_1$, because $\displaystyle{\binom{p^k}{j_1} \equiv 0 \; \text{mod} \; p}$ when $j_1 \neq 1, p^k$ and $\displaystyle{\binom{1}{j_2}=0}$ when $j_2>1$, we have
\[a_{p^k,bp^k+1} \equiv a^*_{1,b+1} \; \text{mod} \; p\]
for every $2 \leq b <n_1$. The basis case follows.

 \textbf{Inductive Step:} Assume
\[a_{(c-1)p^k,bp^k+1} \equiv a^*_{c-1,b+1} \; \text{mod} \; p\]
when $0 \leq b <n_1$. Then, 
\begin{equation}\label{Eq_in_smaller_n_coeff}
a_{cp^k, bp^k+1}=\sum_{i=1}^n a_{p^k,i}a_{(c-1)p^k, bp^k+2-i}.
\end{equation}
Notice that $a_{p^k,s}=\displaystyle{\binom{p^k}{s-1}} \equiv 0 \; \text{mod} \; p$ when $s \neq 1, p^k+1$.
 Therefore, our remaining pieces tell us Equation (\ref{Eq_in_smaller_n_coeff}) is 
\[a_{p^k,1}a_{(c-1)p^k,bp^k+1}+a_{p^k,p^k+1}a_{(c-1)p^k,(b-1)p^k+1} \; \text{mod} \; p.\]
By induction, this is 
\[a^*_{c-1,b+1}+a^*_{c-1,b} \; \text{mod} \; p,\]
which is $a^*_{c,b+1}$. The lemma follows.

\end{proof}

\indent Putting Lemmas \ref{p_dividing_multiplecoeff} and \ref{coefficient_to_smaller_n_broader} together allows us to visualize what $D^{cp^k}(0,0,...,0,1)$ looks like in $\mathbb{Z}_p^n$ for $n=p^kn_1$: 
\[D^{cp^k}(0,0,...,0,1)=(0,0,...,0,a^*_{c,n_1}, 0, ...,0, a^*_{c, n_1-2}, 0, ..., 0, a^*_{c,1}).\]
\indent Bear in mind that the last set of $...$ covers a much larger area then the first two.

\indent We now have the pieces we need to prove Theorem \ref{MainTheorem}:

\begin{proof}[Proof of Theorem \ref{MainTheorem}]
\textbf{(1)}: We would like to first note that when $m$ is prime, this case follows from Lemma 5 of \cite{Breuer1}.

Assume that $gcf(n,m)=1$. We want to prove 
 \[(0,0,...,0,1,1) \in K(\mathbb{Z}_m^n)\]
 and 
 \[(0,0,...,0,1) \not \in K(\mathbb{Z}_m^n).\]
  We first use Theorem \ref{n_even_m_odd_preds} to note that since $-1 \not \equiv 0 \; \text{mod} \; m$, $(0,0,...,0,1)$ does not have a predecessor and therefore $(0,0,...,0,1) \not \in K(\mathbb{Z}_m^n)$. 

    We now note that $(0,0,...,0,1,1)$ has $m$ predecessors and that if any of these predecessors is in the cycle, then so is $(0,0,...,0,1,1)$. In addition to this, if there exists $\mathbf{u}, \mathbf{v} \in \mathbb{Z}_m^n$ such that $D^2(\mathbf{v})=D(\mathbf{u})=(0,0,...,0,1,1)$, then $\mathbf{u} \in K(\mathbb{Z}_m^n)$ because otherwise we would have 
    \[\text{Len}(\mathbf{v})>\text{Len}(\mathbf{u})=\text{Len}(0,0,...,0,1)=L_m(n),\]
     which cannot happen.
    
   \indent Since $(0,0,...,0,1)$ is a predecessor for $(0,0,...,0,1,1)$, then by Theorem 4 of \cite{Paper1}, the other predecessors of $(0,0,...,0,1,1)$ will be of the form 
   \[\mathbf{u}=(z,m-z,z,...,z,1-z)\]
    for some nonzero $z \in \mathbb{Z}_m$, so we only need to prove that there exists such a $z$ where
   $\mathbf{u}$
    has a predecessor. By Theorem \ref{n_even_m_odd_preds}, $\mathbf{u}$ has a predecssor if and only if 
    \[z-(m-z)+z-(m-z)+ \cdots +z-(1-z) \equiv 0 \; \text{mod} \; m,\]
    or equivalently,
    \[nz-1 \equiv 0 \; \text{mod} \; m.\]
    Since $gcf(n,m)=1$, there exists a unique $z$ such that $nz \equiv 1 \; \text{mod} \; m$. Therefore we have a $z$ that satisfies the above and $(z,m-z,z,...,z,1-z)$ has a predecessor. Therefore, $L_m(n)=1$.
    
    \textbf{(2)}: It is worth noting that if $m=p$, then this case follows from Theorem 4 of \cite{Breuer1}.
    Assume $n=p^kn_1$, $m=pm_1$ where $gcf(n_1, m_1)=1$ and $p \nmid n_1, m_1$.
    %We prove this via induction on $l$ with $k$ fixed. 
    We must then start with proving that  $L_m(n)=p^k$.
    
   \indent Let $P_m(n)=d$. From the first case of this theorem, we know $\displaystyle{L_{\frac{m}{p}}(n)}=1$, which means 
    \[a_{d+1,s} \equiv a_{1,s} \; \text{mod} \; \frac{m}{p}\]
    or
    \[ a_{d+1,s} \equiv 
    \begin{cases}
    \begin{aligned}
        1 \; \text{mod} \; \displaystyle{\frac{m}{p}} && s=1,2\\
        0 \; \text{mod} \; \displaystyle{\frac{m}{p}} && \text{otherwise}\\
        \end{aligned}
    \end{cases}.\]
    We will use this to write
    \[a_{d+1,s}\equiv 
    \begin{cases}
    \begin{aligned}
        \delta_s \displaystyle{\frac{m}{p}} +1 \; \text{mod} \; m && s=1,2\\
         \delta_s \displaystyle{\frac{m}{p}}\; \text{mod} \; m && \text{otherwise}\\
         \end{aligned}
    \end{cases}\]
    where $0 \leq \delta_s <p$. We start by showing $a_{p^k+d,s} \equiv a_{p^k,s} \; \text{mod} \; m$ for every $s$. First,
    \[a_{p^k+d,s}=\sum_{i=1}^n a_{d+1,i}a_{p^k-1,s-i+1}.\]
    Substituting in our values for $a_{d+1,s}$, we see this is equivalent to
    \[(\delta_1\frac{m}{p}+1)a_{p^k-1,s}+(\delta_2\frac{m}{p}+1)a_{p^k-1,s-1}+\frac{m}{p}\sum_{i=3}^n \delta_ia_{p^k-1,s-i+1} \; \text{mod} \; m,\]
    which after some rearranging is
    \[a_{p^k-1,s}+a_{p^k-1,s-1}+\frac{m}{p}\sum_{i=1}^n \delta_ia_{p^k-1,s-i+1} \; \text{mod} \; m.\]
    Using $a_{r,s}=a_{r-1,s}+a_{r-1,s-1}$ from Theoremm 5 of \cite{Paper1} and substituting $a_{p^k-1,s-i+1}$, this is
    \[ a_{p^k,s}+\frac{m}{p}\sum_{i=1}^n \delta_i\binom{p^k-1}{s-i} \; \text{mod} \; m.\]
  \indent Notice that this basis case will follow if $\displaystyle{\frac{m}{p}\sum_{i=1}^n \delta_i\binom{p^k-1}{s-i} }\equiv 0 \; \text{mod} \; m$, which will follow if $\displaystyle{\sum_{i=1}^n \delta_i\binom{p^k-1}{s-i}} \equiv 0 \; \text{mod} \; p$. 
    
\indent Note that in this sum, $\displaystyle{\binom{p^k-1}{s-i}} \neq 0$ for only $p^k$ many consecutive terms for every $s$. We would like to use this and the following claim to prove 
\[\sum_{i=1}^n \delta_i\binom{p^k-1}{s-i} \equiv 0 \; \text{mod} \; p.\]
   
    \indent \textbf{Claim:} 
    \begin{enumerate}
        \item $\delta_s=0$ as long as $s \neq bp^k+1, bp^k+2$ for some $0 \leq b < \displaystyle{\frac{n}{p^k}}$.\\
       \item  $\delta_{bp^k+1}=\delta_{bp^k+2},$ when $0 \leq b <\displaystyle{\frac{n}{p^k}}$.\\
       \item  $\delta_{bp^k+1}+\delta_{(b-1)p^k+2} \equiv 0 \; \text{mod} \; p$ where $0 \leq b <\displaystyle{\frac{n}{p^k}}$.\\
    \end{enumerate}
   Notice that  $\displaystyle{\sum_{i=1}^n \delta_i\binom{p^k-1}{s-i}} \equiv 0 \; \text{mod} \; p$ will follow if the claim is true: the claim will give us that most of the $\delta_i$ will be congruent to $0 \; \text{mod} \; p$, and since at most $p^k$ terms in the sum will already be nonzero, there will only be $2$ nonzero terms in the entire sum, as for every $p^k$ consecutive $\delta_i$ you pick, only $2$ will be nonzero. This leaves us with 2 cases:
   
   \indent \textbf{Case 1:} The two nonzero terms in the sum are adjacent to each other. 
   \begin{itemize}
       \item If $p$ is odd, then because of the claim  and Lemma \ref{binom_coeff_p^a-1}, we have $\displaystyle{\sum_{i=1}^n \delta_i\binom{p^k-1}{s-i}}$ is either congruent to
       \[ \delta_{bp^k+1}-\delta_{bp^k+2} \; \text{mod} \; p\]
       or
       \[-\delta_{bp^k+1}+\delta_{bp^k+2} \; \text{mod} \; p,\]
       both of which are equivalent to $0 \; \text{mod}\; p$.\\     
       
       \item If $p=2$, then by the claim and the well-known Lucas's Theorem (a proof of which can be found in Theorem 1 of \cite{Fine}), the sum \[\sum_{i=1}^n \delta_i\binom{2^k-1}{s-i} \equiv \delta_{2^kb+1}+\delta_{2^kb+2} \; \text{mod} \; 2,\]
      which is equivalent to $0 \; \text{mod} \; 2$.
      
   \end{itemize}
   and $L_m(n) \leq p^k$ would follow.
   
   \indent \textbf{Case 2:} The two nonzero terms are $p^k$ terms apart. This will only happen if the remaining terms of $\displaystyle{\sum_{i=1}^n \delta_i\binom{p^k-1}{s-i}}$ are equivalent to
   \[\delta_{bp^k+1}\binom{p^k-1}{0}+\delta_{(b-1)p^k+2}\binom{p^k-1}{p^k-1}  \; \text{mod} \; p, \]
 
   which, by the claim, would be congruent to $0 \; \text{mod} \; p$ and $L_m(n) \leq p^k$ would follow from here as well.
   
   \indent \textbf{Proof of Claim:} Without loss of generality, we may assume that $p^k|d$ and write $d=cp^k$. By Lemma \ref{p_dividing_multiplecoeff}, $a_{d,s} \equiv 0 \; \text{mod} \; p$ where $s \neq bp^k+1$ for some $0 \leq b \leq \displaystyle{\frac{n}{p^k}}$. Then for $s \neq bp^k+1, bp^k+2$,
   \[a_{d+1,s}=a_{d,s}+a_{d,s-1} \equiv 0 \; \text{mod} \; p\]
   which implies $\delta_s=0$ for $s \neq bp^k+1, bp^k+2$ and Part $(1)$ of the Claim follows.
   
   \indent For Part (2) of the Claim, 
   \[a_{d+1,bp^k+1}-a_{d+1,bp^k+2}=a_{d,bp^k+1}+a_{d,bp^k}-a_{d,bp^k+2}-a_{d,bp^k+1},\]
   which is
   \[a_{d,bp^k}-a_{d,bp^k+2} \equiv 0 \; \text{mod} \; p,\]
because of Lemma \ref{p_dividing_multiplecoeff} and because $p^k|d$.
   Therefore, Part $(2)$ of the Claim follows. 
   
   \indent As for Part (3) of the Claim, we have
   \[a_{d+1,bp^k+1}+a_{d+1,(b-1)p^k+2} = a_{d,bp^k+1}+a_{d,bp^k}+a_{d,(b-1)p^k+2}+a_{d,(b-1)p^k+1},\]
   which is equivalent to
   \[a_{d,bp^k+1}+a_{d,(b-1)p^k+1} \; \text{mod} \; p.\]
   By Lemma \ref{coefficient_to_smaller_n_broader}, this is 
   \[ a^*_{c,b+1}+a^*_{c,b} \; \text{mod} \; p\]
   or
   \[ a^*_{c+1,b+1} \;  \text{mod} \; p,\]
   where $a^*_{r,s}$ is the coefficient for $n_1$. By Proposition 4 of \cite{Breuer1}, $P_p(n_1p^k)=p^kP_p(n_1)$. Since 
   \[d=cp^k=p^kP_p(n_1),\] 
   we conclude that $c=P_p(n_1)$.
   So by Part (1) of Theorem \ref{MainTheorem},
\[a_{d+1,bp^k+1}+a_{d+1,(b-1)p^k+2} \equiv 
\begin{cases}
     0 \; \text{mod} \; p & b>1\\
     1 \; \text{mod} \; p & b=0,1\\
\end{cases}.\]
For $b>1$, 
\[a_{d+1,bp^k+1}+a_{d+1,(b-1)p^k+2} \equiv (\delta_{bp^k+1}+\delta_{(b-1)p^k+2})\frac{m}{p} \; \text{mod} \; m,\]
 which will give us $\delta_{bp^k+1}+\delta_{(b-1)p^k+2} \equiv 0 \; \text{mod} \; p$.

For $b=0,1$, 
\[a_{d+1,bp^k+1}+a_{d+1,(b-1)p^k+2}\equiv (\delta_{bp^k+1}+\delta_{(b-1)p^k+2})\frac{m}{p} +1 \; \text{mod} \; m\]
 gives us $\delta_{bp^k+1}+\delta_{(b-1)p^k+2} \equiv 0 \; \text{mod} \; p$ for $b=0,1$ and $(3)$ follows. Since the whole claim follows, this gives us $L_m(n)\leq p^k$.

We now need to prove that $L_m(n)=p^k$. Suppose $L_m(n) \leq p^k-1$. Then $a_{d+p^k-1,s} \equiv a_{p^k-1,s} \; \text{mod} \; m$ for every $s$ and
    \[a_{d+p^k-1,s}=\sum_{i=1}^n a_{d+1,i}a_{p^k-2,s-i+1}.\]
    If we separate out the terms where $s=1,2$, this is
    \[a_{d+1,1}a_{p^k-2,s}+a_{d+1,2}a_{p^k-2,s-1}+\sum_{i=3}^n a_{d+1,i}a_{p^k-2,s-i+1}.\]
    Substituting in our values for $a_{d+1,s}$ and $a_{p^k-2,s-i+1}$ in the sum and rearranging, this is equivalent to
    \[a_{p^k-2,s}+a_{p^k-2,s-1}+\frac{m}{p}\sum_{i=1}^n \delta_s \binom{p^k-2}{s-i} \; \text{mod} \; m\]
    or
   \[ a_{p^k-1,s}+\frac{m}{p}\sum_{i=1}^n \delta_i \binom{p^k-2}{s-i} \; \text{mod} \; m.\]
   This implies that $\displaystyle{\frac{m}{p}\sum_{i=1}^n \delta_i\binom{p^k-2}{s-i}} \equiv 0 \; \text{mod} \; m$, which produces
   \[\sum_{i=1}^n \delta_i\binom{p^k-2}{s-i} \equiv 0 \; \text{mod} \; p.\]
Substituting $\delta_i=0$ for $i \neq bp^k+1, bp^k+2$ from the Claim, we have   
  % This sum is also equivalent to
   \[ \sum_{b=0}^{n_1-1} \delta_{bp^k+1}\binom{p^k-2}{s-bp^k-1}+\delta_{bp^k+2}\binom{p^k-2}{s-bp^k-2} \; \text{mod} \; p.\]
   Using our Claim, this is
   \[\sum_{b=0}^{n_1-1} \delta_{bp^k+1}(\binom{p^k-2}{s-bp^k-1}+\binom{p^k-2}{s-bp^k-2}) \; \text{mod} \; p\]
   or
   \[\sum_{b=0}^{n_1-1} \delta_{bp^k+1}\binom{p^k-1}{s-bp^k-1} \; \text{mod} \; p.\]
   Notice that only 1 of the $\displaystyle{\binom{p^k-1}{s-bp^k-1}}$ is not congruent to $0 \; \text{mod} \; p$. Therefore, we get $\displaystyle{\sum_{i=1}^n \delta_i\binom{p^k-2}{s-i}}$ is congruent to either
   \[\delta_{bp^k+1} \; \text{mod} \; p\]
   or
   \[-\delta_{bp^k+1} \; \text{mod} \; p\]
   for some $0 \leq b \leq n_1-1$. Now as long as one of the $\delta_{bp^k+1}$ coefficients is nonzero, we have a contradiction. Suppose then that $\delta_{bp^k+1}=0$ for every $b$. Then, we would have $L_m(n)=1$. Then $(0,0,...,0,1,1)$ has a predecessor that is also in the cycle. This is equivalent, to one of its predecessors having a predecessor itself. Since one of $(0,0,...,0,1,1)$ is $(0,0,...,0,1)$, all of its other predecessors will be of the form $(z,m-z, z, ..., z, 1-z)$ for some $z \in \mathbb{Z}_m$. Suppose there exists nonzero $z$ such that this tuple has a predecessor. Then it must be true that $z+z+z+\cdots+z-1+z \equiv 0 \; \text{mod} \; m$, which would suggest $nz \equiv 1 \; \text{mod} \; m$. However, since $gcf(n,m)>1$, we cannot have this and we have a contradiction. Therefore, $L_m(n)=p^k$. 

  \textbf{(3)} We prove this via induction on $l$, where part (2) of this theorem serves as the basis case of $l=1$.
  
  \indent \textbf{Inductive step:} Assume that if $n=p^kn_1$, $m^*=p^{l-1}m_1^*$, $gcf(n_1, m_1^*)=1$, and $p \nmid n_1, m_1^*$ then $L_{m^*}(n) \leq p^{k-1}((l-1)(p-1)+1)=\gamma$. 
  
  \indent Now assume that $n=p^kn_1$ and $m=p^lm_1$ with $gcf(n_1, m_1)=1$ and $p \nmid n_1, m_1$. We set out to prove that $L_m(n) \leq \gamma+p^k-p^{k-1}$. Let $P_m(n)=d$. Without loss of generality, we may assume $p^k|d$. Then 
   \[a_{\gamma+d,s} \equiv a_{\gamma,s} \; \text{mod} \; \frac{m}{p}.\]
   Use this to write
   \[a_{\gamma+d,s} \equiv a_{\gamma,s}+\delta_s\frac{m}{p} \; \text{mod}\; m\]
   for some $0 \leq \delta_s <p$. Note also that 
   \[a_{\gamma+p^k-p^{k-1},s}=\sum_{i=1}^n a_{p^k-p^{k-1},i}a_{\gamma,s-i+1}.\]
   Since $a_{p^k-p^{k-1},i}$ is a binomial coefficient for every $i$, plug these in to see this sum is
   \begin{equation}\label{Eq_alt_breakdown}
   \sum_{i=1}^n\binom{p^k-p^{k-1}}{i-1}a_{\gamma,s-i+1}.
   \end{equation}
   Now consider $a_{\gamma+p^k-p^{k-1}+d,s}$:
   \[a_{\gamma+p^k-p^{k-1}+d,s}=\sum_{i=1}^n a_{p^k-p^{k-1},i}a_{\gamma+d,s-i+1}.\]
   Plugging in our values for the $a_{\gamma+d,s-i+1}$, this is equivalent to
   \[\sum_{i=1}^n a_{p^k-p^{k-1},i}(a_{\gamma,s-i+1}+\delta_{s-i+1}\frac{m}{p}) \; \text{mod} \; m.\]
   Separating the sums and plugging in $a_{p^k-p^{k-1},i}$, this is
   \[ \sum_{i=1}^n \binom{p^k-p^{k-1}}{i-1}a_{\gamma,s-i+1}  +\frac{m}{p}\sum_{i=1}^n \binom{p^k-p^{k-1}}{i-1}\delta_{s-i+1}\; \text{mod} \; m.\]
   Using Equation (\ref{Eq_alt_breakdown}), this is
   \[ a_{\gamma+p^k-p^{k-1},s}+\frac{m}{p}\sum_{i=1}^n \binom{p^k-p^{k-1}}{i-1}\delta_{s-i+1} \; \text{mod} \; m.\]
   Therefore, we only need that $\displaystyle{\sum_{i=1}^n \binom{p^k-p^{k-1}}{i-1}\delta_{s-i+1}} \equiv 0 \; \text{mod} \; p$ to see 
   \[L_m(n)\leq \gamma+p^k-p^{k-1}.\] 
   We consider two cases:
  
   \textbf{Case 1} $\mathbf{p}$ \textbf{is an odd prime:} Note that by Lemma \ref{binom_power_of_p_diff}, we have
   \[\sum_{i=1}^n \binom{p^k-p^{k-1}}{i-1}\delta_{s-i+1} \equiv \delta_s-\delta_{s-p^{k-1}}+\delta_{s-2p^{k-1}}-\cdots +\delta_{s-p^k+p^{k-1}} \; \text{mod} \; p\]
  when $p$ is an odd prime. Note that 
  \begin{equation}\label{Eq_coefficientsum_gamma}
  (a_{\gamma+d,s}-a_{\gamma,s})-(a_{\gamma+d,s-p^{k-1}}-a_{\gamma,s-p^{k-1}})+ \cdots + (a_{\gamma+d,s-p^k+p^{k-1}}-a_{\gamma,s-p^k+p^{k-1}}) 
  \end{equation}
  is equivalent to \[\frac{m}{p}(\delta_s-\delta_{s-p^{k-1}}+ \cdots +\delta_{s-p^k+p^{k-1}}) \; \text{mod} \; m.\]
 We can show $\delta_s-\delta_{s-p^{k-1}}+ \cdots +\delta_{s-p^k+p^{k-1}} \equiv 0 \; \text{mod} \; p$ by showing Equation (\ref{Eq_coefficientsum_gamma}) is congruent to $0 \; \text{mod} \; m$. Since we already know that this sum is equivalent to $0 \; \text{mod} \; \displaystyle{\frac{m}{p}}$, it suffices to show Equation (\ref{Eq_coefficientsum_gamma}) is congruent to $0 \; \text{mod} \; p$.
 \begin{comment}
  so $\delta_s-\delta_{s-p^{k-1}}+ \cdots + \delta_{s-p^k+p^{k-1}} 
 \equiv 0 \; \text{mod} \; p$ will follow if 
 \[(a_{x+d,s}-a_{x,s})-(a_{x+d,s-p^{k-1}}-a_{x,s-p^{k-1}})+ \cdots + (a_{x+d,s-p^k+p^{k-1}}-a_{x,s-p^k+p^{k-1}}) \equiv 0 \; \text{mod} \; p.\]
 \end{comment}
 Rewriting the sum in Equation (\ref{Eq_coefficientsum_gamma}) yields 
 %\[(a_{x+d,s}-a_{x,s})-(a_{x+d,s-p^{k-1}}-a_{x,s-p^{k-1}})+ \cdots + (a_{x+d,s-p^k+p^{k-1}}-a_{x,s-p^k+p^{k-1}})\]
 \begin{equation}\label{Eq_2sums}
 \sum_{i=1}^{p-1}(-1)^ia_{\gamma+d,s-ip^{k-1}}-\sum_{i=1}^{p-1}(-1)^ia_{\gamma,s-ip^{k-1}}.
 \end{equation}
 Note that if $s \neq bp^{k-1}+1$ for some $b$, then this is equivalent to $0 \; \text{mod} \; p$ by Lemma \ref{p_dividing_multiplecoeff} and because $p^{k-1}|\gamma,d$. Assume then that $s=bp^{k-1}+1$ for some $b$. Then using $d=cp^k$, $\gamma+d=p^{k-1}((l-1)(p-1)+1+cp)$. So the first sum in Equation (\ref{Eq_2sums}) is
  \[\sum_{i=1}^{p-1}(-1)^ia_{\gamma+d,s-ip^{k-1}}= \sum_{i=1}^{p-1}a_{\gamma+d,(b-i)p^{k-1}+1},\]
  which by Lemma \ref{coefficient_to_smaller_n_broader}, is congruent to
 \begin{equation}\label{Eq_apply_period}
 \sum_{i=1}^{p-1}(-1)^ia^*_{(l-1)(p-1)+1+cp,b-i+1} \; \text{mod} \; p.
 \end{equation}
 By Proposition 4 of \cite{Breuer1}, $P_p(n)=p^kP_p(n_1)$. By Proposition 3.1 of \cite{Dular}, $P_p(n)|P_m(n)$. So
 \[p^kP_p(n_1)|cp^k\]
 and we conclude $P_p(n_1)|c$. Using this yields that (\ref{Eq_apply_period}) is 

 \[\sum_{i=1}^{p-1}(-1)^ia^*_{(l-1)(p-1)+1,b-i+1} \; \text{mod} \; p.\]
  Looking at our other sum in Equation (\ref{Eq_2sums}), 
 \[\sum_{i=1}^{p-1}(-1)^ia_{\gamma,s-ip^{k-1}}= \sum_{i=1}^{p-1}(-1)^ia_{\gamma,(b-i)p^{k-1}+1},\]
 which by Lemma \ref{coefficient_to_smaller_n_broader} is
 \[\sum_{i=1}^{p-1}(-1)^ia^*_{(l-1)(p-1)+1,b
 -i+1} \; \text{mod} \; p.\]
 So we have that Equation (\ref{Eq_2sums}) is
 \[\sum_{i=1}^{p-1}(-1)^ia_{\gamma+d,s-ip^{k-1}}-\sum_{i=1}^{p-1}(-1)^ia_{\gamma,s-ip^{k-1}} \equiv 0 \; \text{mod} \; p,\]
  which gives us that $L_m(n)\leq p^{k-1}(l(p-1)+1)$. 
 
  \textbf{Case 2} $\mathbf{p=2}$: We want to show $L_m(n) \leq 2^{k-1}(l+1)$. Write $d=2^kc$. Then the sum we are interested in is
   \[\sum_{i=1}^n \binom{p^k-p^{k-1}}{i-1}\delta_{s-i+1}=\sum_{i=1}^n \binom{2^{k-1}}{i-1} \delta_{s-i+1},\]
   which is equivalent to
   \[\delta_s+\delta_{s-2^{k-1}} \; \text{mod} \; 2\]
   Note that 
   \[(a_{\gamma+d,s}-a_{\gamma,s})+(a_{\gamma+d,s-2^{k-1}}-a_{\gamma,s-2^{k-1}}) \equiv \frac{m}{2}(\delta_s+\delta_{s-2^{k-1}} \; \text{mod}) \; m,\]
    so similar to before, $\delta_s+\delta_{s-2^{k-1}} \equiv 0 \; \text{mod} \; 2$ if 
    \[(a_{\gamma+d,s}+a_{\gamma+d,s-2^{k-1}})-(a_{\gamma,s}+a_{\gamma,s-2^{k-1}}) \equiv 0 \; \text{mod} \; 2.\]
     Notice that this is equivalent to $ 0 \; \text{mod} \; 2$ if $s \neq 2^{k-1}b+1$ for some $b$. Assume then that $s=2^{k-1}b+1$. Then 
   
   \[(a_{\gamma+d,s}+a_{\gamma+d,s-2^{k-1}})-(a_{\gamma,s}+a_{\gamma,s-2^{k-1}})\]\[= (a_{\gamma+d,2^{k-1}b+1}+a_{\gamma+d, 2^{k-1}(b-1)+1})-(a_{\gamma,2^{k-1}b+1}+a_{\gamma,2^{k-1}(b-1)+1})\]
   Using $\gamma+d=2^{k-1}((l-1)+1+2c)$, this is congruent to
   \[(a^*_{(l-1)+1+2c,b+1}+a^*_{(l-1)+1+2c,b})-(a^*_{(l-1)+1,b+1}+a^*_{(l-1)+1, b}) \; \text{mod} \; 2.\]
   Like before, we still have that $P_p(n_1)|c$, so this is 
   \[(a^*_{(l-1)+1,b+1}+a^*_{(l-1)+1, b})-( a^*_{(l-1)+1,b+1}+a^*_{(l-1)+1, b}) \; \text{mod} \; 2,\]
   which is equivalent to $0 \; \text{mod} \; 2$. Therefore, $L_m(n) \leq 2^{k-1}(l+1)$. 
 %Need to finish this to get $L_m(n)=p^{k-1}(l(p-1)+1)$
 
\indent \textbf{(4)}: Assume $n=p_1^{k_1}p_2^{k_2} \cdots p_r^{k_t}n_1$ and $m=p_1^{l_1}p_2^{l_2} \cdots p_r^{l_t}m_1$ with $p_i \nmid n_1, m_1$ for every $i$ and $gcf(n_1,m_1)=1$. We wish to show that 
\[L_m(n)=max \{L_{p_i^{l_i}}(n) \; | \; 1 \leq i \leq t\}.\]
 Let $\gamma_i=L_{p_i^{l_i}}(n)$, $\gamma=max\{\gamma_i \; | \; 1 \leq i \leq t\}$, and $d=P_m(n)$. We will have that 
\[a_{\gamma_i+d,s} \equiv a_{\gamma_i,s} \; \text{mod} \; p_i^{l_i}\]
\[a_{1+d,s} \equiv a_{1,s} \; \text{mod} \; m_1\]
for every $s$ and every $i$. 
Then, since $\gamma \geq \gamma_i$ for every $i$, we have that 
\[a_{\gamma+d,s} \equiv a_{\gamma,s} \; \text{mod} \; p_i^{l_i}\]
\[a_{\gamma+d,s} \equiv a_{\gamma,s} \; \text{mod} \; m_1,\]
which gives $a_{\gamma+d,s} \equiv a_{\gamma,s} \; \text{mod} \; m$. 

\indent If we had that $L_m(n)< \gamma$, then $a_{\gamma+d-1,s} \equiv a_{\gamma-1,s} \; \text{mod} \; m$ would imply $a_{\gamma+d-1,s} \equiv a_{\gamma-1,s} \; \text{mod} \; p_i^{l_i}$ for every $i$. But $a_{\gamma+d-1, s} \not \equiv a_{\gamma-1,s} \; \text{mod} \; p_j^{l_j}$ for some $j$, which gives us a contradiction. Therefore, $L_m(n)=\gamma$.

\end{proof}

\end{document}